\theoremstyle{definition}
\newtheorem{theorem}{Theorem}
\newtheorem{lemma}[theorem]{Lemma}
\newtheorem{proposition}[theorem]{Proposition}
\newtheorem{corollary}[theorem]{Corollary}
\newtheorem{definition}[theorem]{Definition}
\newtheorem{notation}[theorem]{Notation}
\def\N{\mathbb{N}}
\def\myslash{\mbox{Slash}(\Sigma)}
\begin{document}

\title{A Dichotomy in Machine Knowledge}
\author{Samuel A.~Alexander\thanks{Email:
alexander@math.ohio-state.edu}\\
\emph{Department of Mathematics, the Ohio State University}}

\maketitle

\begin{abstract}
We show that a machine, which knows basic logic and arithmetic and basic axioms of knowledge,
and which is factive (knows nothing false), can either know that it is factive,
or know its own G\"{o}del number, but not both.
\end{abstract}

\section{Introduction}

This is not a paper about artificial intelligence or conscious machines.
But for motivational purposes,
temporarily imagine we did have such a machine, whatever exactly that entails.
We could ask this machine to tell us everything it knows: to enumerate its knowledge.
It would then begin telling us various things: ``$1+1=2$'', ``there are
infinitely many primes'', and so on.
It would also tell us things about its knowledge: ``I know that $1+1=2$''.
We can further restrict our request, asking the machine to list only
facts which it knows in the language of (say) Peano Arithmetic extended by
a connective $K$ for knowledge (formalized in Section~\ref{baselogicsection}):
``I know $1+1=2$'' becomes ``$K(1+1=2)$''.
We can at least say one thing about this list of knowledge:
it is recursively enumerable (at least if we let the machine enumerate
without outside disturbance).

Conscious or intelligent computers are beyond the scope of this paper,
but the above shows how we can study \emph{machine knowledge}
anyway.  Namely, we study recursively enumerable sets of formulas in a language
which includes a modal operator for knowledge.  In this paper, the language will
be that of $PA$ augmented by a unary knowledge operator $K$.
We abuse language and identify a machine with its set of knowledge.
For example, when we say a machine is ``factive'', we mean that its set of
of known formulas are all true (in a background model).  If we say a machine ``knows Peano arithmetic'',
we mean that the axioms of PA
are in the r.e.~set.

\emph{Trivial Examples.}  There is the know-nothing machine: we ask it to enumerate
its knowledge and it lists nothing.  This machine is vacuously factive.  We can say it satisfies
the schema $K(\phi)\rightarrow\phi$.  The hypothesis, $K(\phi)$, means that $\phi$ is among the
list of known formulas.  The conclusion $\phi$ means $\phi$ is really true.  Again, there is
the all-knowing machine: it lists every formula.  This machine is not factive.
It does not satisfy $K(0\not=0)\rightarrow (0\not=0)$ (the hypothesis is true, the conclusion false).
Again, there is the machine which knows exactly the consequences of $PA$.
This machine is factive (if the background universe is $\N$) but it does
not know itself to be factive (the schema $K(K(\phi)\rightarrow\phi)$ fails).

We are primarely concerned with machines which have the following properties.
(which we call the \emph{axioms of a knowing machine}).
\begin{itemize}
\item Knowledge of Tautology: $K(\phi)$ whenever $\phi$ is tautological.
\item Knowledge Modus Ponens: $K(\phi\rightarrow\psi)\rightarrow K(\phi)\rightarrow K(\psi)$.
\item Knowledge of Arithmetic: $K(\phi)$ whenever $\phi$ is an axiom of $PA$.
\item Closure: $K(\phi)$ whenever $\phi$ is one of the above assumptions.
\item Factivity: $K(\phi)\rightarrow\phi$.  Everything known is true.
\end{itemize}

Each of these assumptions is plausible, requiring little of the machine in question.
All are standard in epistemology.  We are also interested in two additional properties, and the goal
is to show that, together with the above basics, these properties are individually possible (for some $e\in\N$) but are
mutually inconsistent:
\begin{itemize}
\item Knowledge of Factivity: $K(K(\phi)\rightarrow \phi)$.
\item Knowledge of having G\"{o}del number $e$:  $K(K(\phi)\leftrightarrow \ulcorner \phi\urcorner \in W_e)$,
where ``$\ulcorner \phi\urcorner\in W_e$'' abbreviates a canonical sentence expressing that
the G\"{o}del number of $\phi$ in the $e$th r.e.~set.
\end{itemize}

In \cite{reinhardt}, it was shown that a knowing machine cannot know its own G\"{o}del number.
However, this ``implicitly'' assumed Knowledge of Factivity.
I say this requirement was ``implicit'' because it was not explicitly
spelled out, but rather part of a closure requirement:
in essence, in the above list of assumptions, ``Closure'' and ``Factivity'' were permuted.

\section{A Very Simple Modal Logic}
\label{baselogicsection}

In this section we formalize quantified modal logic.  There are many ways to do this.
We take a very simple and weak approach.
This approach is so weak that (unlike many treatments of modal logic)
it does not actually depart from standard first-order logic.
By taking such a weak approach to modal logic, we eliminate a lot of
technical difficulty.

\begin{definition}
Suppose $\mathscr{L}$ is a first-order language and $K$ is a symbol not in $\mathscr{L}$ (we will call $K$
an \emph{unary modal operator}).
The first-order language $\mathscr{L}(K) $\emph{obtained by weakly extending $\mathscr{L}$ by $K$} is defined inductively as follows:
\begin{enumerate}
\item All the function, constant, and predicate symbols of $\mathscr{L}$ are also in $\mathscr{L}(K)$.
\item For every formula $\phi$ of $\mathscr{L}(K)$,
$\mathscr{L}(K)$ contains a new $0$-ary predicate symbol $K_{\phi}$.
\end{enumerate}
\end{definition}

\begin{notation}
\label{notation2}
Write $K(\phi)$ for $K_{\phi}$.
The abbreviation $K(\phi)$
may be pronounced ``$\phi$ is known'' or ``I know $\phi$'' or (for brevity) ``know $\phi$''.
\end{notation}

The notation is applied inductively.  For example, we may write $K(K(\phi))$
to abbreviate $K_{K_{\phi}}$.

\emph{Warning.}  Notation~\ref{notation2} does not play nicely with variable substitution.
For instance $K(x=y)(x|z)$ (the result of substituting $z$ for $x$)
is $K(x=y)$, not $K(z=y)$.
Failure to heed this warning can lead to philosophical paradoxes.
In our treatment, the schemas $(\forall x\phi)\rightarrow \phi(x|t)$ and
$(t_1=t_2)\rightarrow \phi(x|t_1)\rightarrow \phi(x|t_2)$ are valid, and the Substitution
Theorem holds: after all, we have not left classical first-order logic!  These things
can fail in some treatments of modal logic (see Shapiro \cite{shapiro}).

Hereafter, let $\mathscr{L}(K)$ be the language of $PA$ weakly extended by $K$.

\begin{definition}
\label{machinedef}
By a \emph{knowing entity} we mean an $\mathscr{L}(K)$-structure $\mathscr{M}$
with universe $\mathbb{N}$,
interpreting symbols of $PA$ as usual,
such that $\mathscr{M}$ satisfies the axioms of a knowing machine from the Introduction.
By a \emph{knowing machine} we mean a knowing entity $\mathscr{M}$
with the property that $\{\ulcorner\phi\urcorner\,:\,\mathscr{M}\models K(\phi)\}$ is r.e.
\end{definition}

Note that an entity (hence a machine) is completely determined by the formulas which it knows.
In fact, it would be possible to reformulate Definition~\ref{machinedef} and define entities and machines
to be sets of formulas; this is what Carlson does \cite{carlson2000} (pp.~59, 61).  For our purposes, Definition~\ref{machinedef} is
more convenient.

\section{An Inconsistency Result}
\label{section3}

William Reinhardt \cite{reinhardt} showed that a knowing machine cannot simultaneously
know that it is factive, and also know its own G\"{o}del number (although everything was
formalized in different ways).  In this section we offer a streamlined
proof of this result.

\begin{proposition}
\label{prop5}
Let $e\in\N$ and
let $\Sigma$ be the set of axioms of a knowing machine together with Knowledge of Factivity and Knowledge of having G\"{o}del number $e$.
Then $\Sigma$ is inconsistent.
\end{proposition}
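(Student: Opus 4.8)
The plan is to carry out a G\"odelian diagonal argument whose key move is to use the hypothesis ``Knowledge of having G\"odel number $e$'' to trade the modal formula $K(\delta)$ for a genuine arithmetic sentence, making the diagonal lemma applicable inside $PA$; ``Knowledge of Factivity'' together with plain Factivity then squeezes out a contradiction. As a preliminary I would observe that $\Sigma$ proves $K(\psi)$ for every theorem $\psi$ of $PA$: ``Knowledge of Arithmetic'' gives $K$ of each axiom of $PA$, ``Knowledge of Tautology'' gives $K$ of each logically valid formula, and ``Knowledge Modus Ponens'' propagates $K$ across modus ponens, so $\{\phi:\Sigma\vdash K(\phi)\}$ contains $PA$ and is closed under first-order consequence. (The ``Closure'' axiom is not needed anywhere below.)

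Now let $\theta(x)$ be the canonical arithmetic formula expressing ``$x\in W_e$''. By the diagonal lemma there is a sentence $\delta$ of $PA$ --- in particular an $\mathscr{L}(K)$-sentence containing no occurrence of $K$ --- with
\[
PA\vdash \delta\ \leftrightarrow\ \neg\,\theta(\ulcorner\delta\urcorner).
\]
Because $\delta$ and the numeral $\ulcorner\delta\urcorner$ are fixed and nothing is ever substituted inside the scope of $K$, the Warning following Notation~\ref{notation2} creates no obstacle. By the preliminary observation, $\Sigma\vdash K\big(\delta\leftrightarrow\neg\,\theta(\ulcorner\delta\urcorner)\big)$, while the instance $\phi=\delta$ of ``Knowledge of having G\"odel number $e$'' gives $\Sigma\vdash K\big(K(\delta)\leftrightarrow\theta(\ulcorner\delta\urcorner)\big)$.

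Combining these two facts inside $K$ via ``Knowledge of Tautology'' and ``Knowledge Modus Ponens'' --- the required implication is a propositional tautology in the atoms $K(\delta)$, $\delta$, $\theta(\ulcorner\delta\urcorner)$ --- yields $\Sigma\vdash K\big(K(\delta)\leftrightarrow\neg\delta\big)$. The instance $\phi=\delta$ of ``Knowledge of Factivity'' gives $\Sigma\vdash K\big(K(\delta)\to\delta\big)$. From $K\big(K(\delta)\leftrightarrow\neg\delta\big)$ and $K\big(K(\delta)\to\delta\big)$, two more propositional tautologies plus ``Knowledge Modus Ponens'' give both $\Sigma\vdash K(\delta)$ and $\Sigma\vdash K(\neg K(\delta))$. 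Applying plain Factivity, the schema $K(\phi)\to\phi$, with $\phi=\neg K(\delta)$, gives $\Sigma\vdash\neg K(\delta)$, contradicting $\Sigma\vdash K(\delta)$. Hence $\Sigma$ is inconsistent.

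I expect the only genuine obstacle to be bookkeeping: confirming that the five listed knowing-machine axioms (rather than some stronger reflection principle) really do force $\{\phi:\Sigma\vdash K(\phi)\}$ to be closed under $PA$-provability, and keeping careful track, throughout the propositional steps, of which subformulas lie inside which occurrence of $K$. The self-referential core is completely standard once the ``Knowledge of having G\"odel number $e$'' hypothesis has been used to arithmetize $K(\delta)$ away.
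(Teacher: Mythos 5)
Your proof is correct and is essentially the paper's argument: both diagonalize to obtain $PA\vdash\delta\leftrightarrow\neg(\ulcorner\delta\urcorner\in W_e)$, use Knowledge of having G\"odel number $e$ to turn this into $K(\delta)\leftrightarrow\neg\delta$, and use Knowledge of Factivity together with Knowledge of Tautology and Knowledge Modus Ponens to derive $K(\delta)$ and a contradiction. The only organizational difference is that the paper first proves $\delta$ at the object level from $PA\cup\{K(\delta)\rightarrow\delta,\,K(\delta)\leftrightarrow\ulcorner\delta\urcorner\in W_e\}$ and then lifts that finite derivation under $K$, landing the contradiction at $\ulcorner\delta\urcorner\in W_e$ versus $\ulcorner\delta\urcorner\notin W_e$, whereas you carry out the whole derivation under $K$ and land it at $K(\delta)$ versus $\neg K(\delta)$ via a single application of Factivity.
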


\begin{proof}
By G\"{o}del's diagonal lemma, there is a sentence $\phi$ such that Peano Arithmetic
proves $\phi\leftrightarrow \ulcorner \phi\urcorner\not\in W_e$.

Work in $\Sigma_0=PA\cup\{K(\phi)\rightarrow\phi,K(\phi)\leftrightarrow \ulcorner\phi\urcorner\in W_e\}$ (a subset of the
consequences of $\Sigma$).
By PA, we have 
$\phi\leftrightarrow\ulcorner\phi\urcorner\not\in W_e$.
Combining this with $K(\phi)\leftrightarrow \ulcorner\phi\urcorner\in W_e$, we have $K(\phi)\leftrightarrow\neg\phi$.
Assuming $\neg\phi$, we obtain $K(\phi)$, and then by $K(\phi)\rightarrow\phi$, we obtain $\phi$.
Altogether, this establishes $\phi$.

I proved $\phi$ from $\Sigma_0$.
Thus there are finitely many axioms $\sigma_1,\ldots,\sigma_n$ from $\Sigma_0$
such that $\sigma_1\rightarrow\cdots\rightarrow\sigma_n\rightarrow\phi$ is a tautology.
Now
\begin{align*}
\Sigma &\models K(\sigma_1\rightarrow \cdots \rightarrow \sigma_n\rightarrow \phi) & \mbox{(Knowledge of Tautology)}\\
\Sigma &\models K(\sigma_1)\rightarrow \cdots \rightarrow K(\sigma_n)\rightarrow K(\phi) & \mbox{(Knowledge Modus Ponens)}\\
\Sigma &\models K(\sigma_1)\wedge \cdots \wedge K(\sigma_n) &\mbox{($*$)}\\
\Sigma &\models K(\phi) &\mbox{(Modus Ponens)}\\
\Sigma &\models \ulcorner\phi\urcorner\in W_e. &\mbox{(Knowledge of having code $e$)}
\end{align*}
Line $(*)$ is true because for every element $\sigma_i$ of $\Sigma_0$, $K(\sigma_i)$ is an axiom in $\Sigma$; this is the only place
where we invoke Knowledge of Factivity (one of the $\sigma_i$ being an instance of Factivity).
So $\Sigma\models \phi$, $\Sigma\models \ulcorner\phi\urcorner\in W_e$, and $\Sigma\models \phi\leftrightarrow \ulcorner\phi\urcorner\not\in W_e$,
establishing inconsistency.
\end{proof}

\begin{theorem}
\label{theorem5}
\emph{(Reinhardt)}
There is no knowing machine satisfying Knowledge of Factivity and Knowledge of having G\"{o}del number $e$,
for any $e\in\N$.
\end{theorem}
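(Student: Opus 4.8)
The plan is to read off Theorem~\ref{theorem5} as an immediate corollary of Proposition~\ref{prop5}. I would argue by contradiction: suppose that for some $e\in\N$ there were a knowing machine $\mathscr{M}$ satisfying Knowledge of Factivity and Knowledge of having G\"{o}del number $e$. Let $\Sigma$ be exactly the set of sentences appearing in the statement of Proposition~\ref{prop5} for this particular $e$, namely the axioms of a knowing machine together with Knowledge of Factivity and Knowledge of having G\"{o}del number $e$.

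The key observation is that such an $\mathscr{M}$ would be a model of $\Sigma$. Indeed, $\mathscr{M}$ is by definition an $\mathscr{L}(K)$-structure with universe $\N$ interpreting $PA$ as usual and satisfying the axioms of a knowing machine, so every instance of those schemas holds in $\mathscr{M}$; and by the additional hypothesis the (schematic) sentences expressing Knowledge of Factivity and Knowledge of having G\"{o}del number $e$ also hold in $\mathscr{M}$. Hence $\mathscr{M}\models\Sigma$, so $\Sigma$ is satisfiable, hence consistent. This contradicts Proposition~\ref{prop5}, which asserts that $\Sigma$ is inconsistent. Therefore no such $\mathscr{M}$ exists, for any $e$.

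There is essentially no technical obstacle here, since the real work was done in Proposition~\ref{prop5}. The only points requiring a word of care are: (i) the universal quantifier over $e$ in the theorem is handled simply by invoking Proposition~\ref{prop5} with whatever index the hypothetical machine claims for itself, so no uniformity in $e$ is needed; and (ii) one should be explicit that ``inconsistent'' for a set of first-order $\mathscr{L}(K)$-sentences means ``has no model'' (equivalently, by completeness, ``proves $0\not=0$''), which is precisely what forbids the existence of the structure $\mathscr{M}$. I would also note in passing that the recursive-enumerability clause in the definition of a knowing machine plays no role in this direction: the same argument shows there is in fact no \emph{knowing entity} with the two extra properties, and the r.e.\ hypothesis only becomes essential later when one wants the companion (possibility) results.
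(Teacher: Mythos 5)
Your proposal is correct and takes essentially the same approach as the paper, whose proof is the single observation that such a machine would be a model of the set $\Sigma$ from Proposition~\ref{prop5}, contradicting its inconsistency. Your additional remarks (that no uniformity in $e$ is needed and that the r.e.~clause is not used here) are accurate elaborations of the same argument.
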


\begin{proof}
Such a machine would satisfy the set $\Sigma$ from Proposition~\ref{prop5}.
\end{proof}

\section{Consistency of a machine knowing itself to be factive}

In this section, we exhibit a knowing machine which possesses Knowledge of Factivity.
We attempt to streamline
Appendix B in Timothy Carlson's paper \cite{carlson2000}.  Carlson himself streamlined
an argument due to Shapiro \cite{shapiro}.  In both,
Kleene's \cite{kleene} \emph{slash} operator was used; we short-circuit it.

\begin{definition}
Let $\Sigma$ be the set of axioms of a knowing machine together with the axioms of Peano Arithmetic.
Let $\myslash$ be the $\mathscr{L}(K)$-model which has universe $\N$,
interprets symbols of $PA$ in the usual way, and interprets knowledge inductively as follows:
\[
\mbox{$\myslash\models K(\phi)$ iff $\myslash\models \phi$ and $\Sigma\models\phi$.}\]
\end{definition}

\begin{lemma}
\label{slashlemma}
$\myslash\models \Sigma$.  Also, $\myslash$ satisfies the schema $K(K(\phi)\rightarrow\phi)$.
\end{lemma}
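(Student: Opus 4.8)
The plan is to check, one axiom schema at a time, that $\myslash$ satisfies every axiom in $\Sigma$, and then to read off $K(K(\phi)\to\phi)$ almost for free. Throughout, recall the defining equivalence: $\myslash\models K(\psi)$ iff $\myslash\models\psi$ and $\Sigma\models\psi$; note first that this recursion is legitimate, since the truth value it assigns to $K(\psi)$ depends only on $\Sigma$ and on formulas of strictly lower $\L(K)$-complexity than $K(\psi)$. Now, since $\myslash$ interprets the symbols of $PA$ standardly on $\N$, it satisfies $PA$ at once; in particular each Knowledge of Arithmetic instance $K(\alpha)$ (with $\alpha$ an axiom of $PA$) holds, because $\myslash\models\alpha$ and $\Sigma\models\alpha$. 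Each Knowledge of Tautology instance $K(\tau)$ holds because a tautology $\tau$ is true in every $\L(K)$-structure and is a logical consequence of the empty theory. Factivity holds in $\myslash$ directly: if $\myslash\models K(\phi)$ then $\myslash\models\phi$ by definition. Knowledge Modus Ponens needs only a brief unwinding: from $\myslash\models K(\phi\to\psi)$ and $\myslash\models K(\phi)$ we get $\myslash\models\phi\to\psi$, $\Sigma\models\phi\to\psi$, $\myslash\models\phi$, and $\Sigma\models\phi$; ordinary modus ponens, applied in $\myslash$ and under $\models$, gives $\myslash\models\psi$ and $\Sigma\models\psi$, whence $\myslash\models K(\psi)$.

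The one step I expect to need care — the main obstacle — is the Closure schema, whose instances are $K(\sigma)$ for $\sigma$ an instance of one of the other knowing-machine schemas, Closure included. To obtain $\myslash\models K(\sigma)$ I need $\Sigma\models\sigma$ and $\myslash\models\sigma$. The former is immediate, since any such $\sigma$ is literally an axiom of $\Sigma$. The latter asserts that $\myslash$ satisfies whichever schema $\sigma$ instantiates, which looks circular precisely when $\sigma$ is itself a Closure instance $K(\rho)$. I would break the apparent circle by induction on the length of $\sigma$: a Closure instance peels off as $K(\rho)$ with $\rho$ strictly shorter and again an instance of one of the knowing-machine schemas, so after finitely many peelings one lands on an instance of Knowledge of Tautology, Knowledge of Arithmetic, Knowledge Modus Ponens, or Factivity, each already disposed of above with no appeal to Closure. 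Hence every Closure instance holds in $\myslash$, completing the proof that $\myslash\models\Sigma$.

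Finally, for the schema $K(K(\phi)\to\phi)$: by the defining equivalence it suffices that $\myslash\models K(\phi)\to\phi$ and $\Sigma\models K(\phi)\to\phi$. The first is the Factivity instance just verified to hold in $\myslash$; the second holds because $K(\phi)\to\phi$ is an axiom of every knowing machine, hence lies in $\Sigma$. Thus $\myslash\models K(K(\phi)\to\phi)$, which is the remaining assertion. The whole argument is short; its only delicate points are the well-definedness of the recursive construction of $\myslash$ and the grounding of the self-referential Closure schema, both handled by the complexity and length inductions indicated above.
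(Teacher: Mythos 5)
Your proof is correct and follows essentially the same case-by-case verification as the paper's; the only difference is that you read Closure as self-applicable (and applicable to Factivity) and ground it by an induction on formula length, whereas the paper's Closure schema covers only ``the above assumptions'' (Knowledge of Tautology, Knowledge Modus Ponens, Knowledge of Arithmetic), so no such induction is needed there. Either reading goes through, and your derivation of $K(K(\phi)\rightarrow\phi)$ from Factivity holding in $\myslash$ together with $K(\phi)\rightarrow\phi$ being an axiom of $\Sigma$ matches the paper's exactly.
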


\begin{proof}\hfill
\begin{itemize}
\item \emph{(Knowledge of Tautology)} If $\phi$ is a tautology, then $\myslash\models\phi$
and $\Sigma\models\phi$, so $\myslash\models K(\phi)$.
\item \emph{(Knowledge Modus Ponens)} Suppose that $\myslash\models K(\phi\rightarrow\psi)$ and $\myslash\models K(\phi)$.
This means $\myslash\models \phi\rightarrow\psi$, $\Sigma\models\phi\rightarrow\psi$,
$\myslash\models \phi$, and $\Sigma\models\phi$.  By Modus Ponens, $\myslash\models \psi$
and $\Sigma\models\psi$.  So $\myslash\models K(\psi)$.
\item \emph{(Knowledge of Arithmetic)}
If $\phi$ is an axiom of $PA$, then $\myslash\models\phi$
since $\myslash$ has universe $\N$ and interprets symbols of $PA$ in the usual way.
Also, $\Sigma\models \phi$ since $\Sigma$ contains $K(\phi)$ (Knowledge of Arithmetic)
as well as $K(\phi)\rightarrow\phi$ (Factivity).  Altogether, $\myslash\models K(\phi)$.
\item \emph{(Closure)}
If $\phi$ is an instance of Knowledge of Tautology, Logic, or Arithmetic,
then $\myslash\models\phi$ by the above items.  And certainly $\Sigma\models\phi$.
So $\myslash\models K(\phi)$.
\item \emph{(Factivity)}
If $\myslash\models K(\phi)$, then by definition $\myslash\models \phi$.
\item
\emph{(Knowledge of Factivity)} Since $\Sigma$ contains Factivity as an axiom, $\Sigma\models K(\phi)\rightarrow\phi$.
We showed $\myslash\models K(\phi)\rightarrow\phi$.
Together these show $\myslash\models K(K(\phi)\rightarrow\phi)$.
\end{itemize}
\end{proof}

\begin{corollary}
For any $\phi$, $\myslash\models K(\phi)$ iff $\Sigma\models\phi$.
\end{corollary}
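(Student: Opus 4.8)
The plan is to obtain both directions directly from the definition of $\myslash$, using Lemma~\ref{slashlemma} for the one nontrivial half. Recall that by construction, $\myslash\models K(\phi)$ holds exactly when $\myslash\models\phi$ \emph{and} $\Sigma\models\phi$; so the statement to be proved differs from this definition only in that it drops the clause ``$\myslash\models\phi$''. The point is that this clause is redundant, because $\myslash$ happens to be a model of $\Sigma$.

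First I would handle the forward implication. Suppose $\myslash\models K(\phi)$. Unwinding the definition of how $\myslash$ interprets knowledge, this means precisely that $\myslash\models\phi$ and $\Sigma\models\phi$; in particular $\Sigma\models\phi$, as desired. Next, for the reverse implication, suppose $\Sigma\models\phi$. By Lemma~\ref{slashlemma}, $\myslash\models\Sigma$, that is, $\myslash$ is a model of $\Sigma$; since $\Sigma\models\phi$ means that $\phi$ is true in every model of $\Sigma$, we conclude $\myslash\models\phi$. Thus both $\myslash\models\phi$ and $\Sigma\models\phi$ hold, which is exactly the condition defining $\myslash\models K(\phi)$.

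I do not expect any real obstacle here: once Lemma~\ref{slashlemma} is in hand, the corollary is essentially a repackaging of the definition of $\myslash$, and the only thing being used beyond that definition is the single fact $\myslash\models\Sigma$. (It is worth noting that this is what makes $\myslash$ a genuine knowing entity whose knowledge is witnessed by provability from $\Sigma$, which is the shape of result one wants for the consistency direction of the dichotomy.)
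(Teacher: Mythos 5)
Your proof is correct and matches the paper's argument exactly: the forward direction is immediate from the definition of how $\myslash$ interprets $K$, and the reverse direction uses $\myslash\models\Sigma$ from Lemma~\ref{slashlemma} to supply the otherwise-missing clause $\myslash\models\phi$. No issues.
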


\begin{proof}
($\Rightarrow$) By definition.  ($\Leftarrow$) Suppose $\Sigma\models\phi$.
By Lemma~\ref{slashlemma}, $\myslash\models\Sigma$.  Thus $\myslash\models\phi$.
Together, this shows $\myslash\models K(\phi)$.
\end{proof}

\begin{theorem}
There is a knowing machine which possesses Knowledge of Factivity.
\end{theorem}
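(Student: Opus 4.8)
The plan is to take the slash model $\myslash$ itself as the desired knowing machine. Lemma~\ref{slashlemma} already supplies almost everything: it shows $\myslash\models\Sigma$, so in particular $\myslash$ satisfies the axioms of a knowing machine (these being among the axioms comprising $\Sigma$), hence $\myslash$ is a knowing entity; and it shows $\myslash$ satisfies the schema $K(K(\phi)\rightarrow\phi)$, which is exactly Knowledge of Factivity. The only thing left to verify is that $\myslash$ is a knowing \emph{machine}, i.e.\ that $\{\ulcorner\phi\urcorner \,:\, \myslash\models K(\phi)\}$ is recursively enumerable.

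To that end I would first invoke the preceding Corollary, which identifies this set with $\{\ulcorner\phi\urcorner \,:\, \Sigma\models\phi\}$, so it suffices to show the latter is r.e. Next I would observe that $\Sigma$ — the axioms of a knowing machine together with the axioms of $PA$ — is presented by finitely many effective schemas (Knowledge of Tautology, Knowledge Modus Ponens, Knowledge of Arithmetic, Closure, Factivity, and the $PA$ axiom schemas), so membership in $\Sigma$ is decidable; in particular $\Sigma$ is r.e. Here one should note that although $\mathscr{L}(K)$ has infinitely many $0$-ary predicate symbols $K_\phi$, they are indexed effectively by (codes of) $\mathscr{L}(K)$-formulas, so a Gödel numbering of $\mathscr{L}(K)$ extends the usual one with $K_\phi$ corresponding to a primitive recursive operation on codes, and recognizing whether a sentence is an instance of one of the schemas is decidable. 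Then I would apply the completeness theorem for first-order logic: since $\Sigma$ is r.e.\ and $\myslash$ does not leave classical first-order logic (as emphasized in Section~\ref{baselogicsection}), the consequence set $\{\ulcorner\phi\urcorner \,:\, \Sigma\models\phi\}$ is r.e.\ — enumerate all finite subsets of $\Sigma$ and all formal proofs. Combining with the Corollary, $\{\ulcorner\phi\urcorner \,:\, \myslash\models K(\phi)\}$ is r.e., so $\myslash$ is a knowing machine, and by Lemma~\ref{slashlemma} it possesses Knowledge of Factivity.

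The main obstacle — really the only point beyond bookkeeping — is ensuring the ambient logic of $\mathscr{L}(K)$ is genuinely effective: that its symbols admit an effective coding, that provability in it is r.e., and that $\Sigma$ is an effectively presented theory in it. But the deliberately weak, classical-first-order treatment of modal logic set up in Section~\ref{baselogicsection} is precisely what makes this routine: $K_\phi$ behaves like an ordinary predicate symbol whose code is computed primitive-recursively from that of $\phi$, so standard recursion-theoretic facts about first-order theories apply verbatim, and the theorem follows.
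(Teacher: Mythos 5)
Your proposal is correct and is essentially the paper's own argument: the paper likewise takes $\myslash$ as the machine, cites Lemma~\ref{slashlemma} for entity-hood and Knowledge of Factivity, and uses the Corollary's identity $\{\ulcorner\phi\urcorner\,:\,\myslash\models K(\phi)\}=\{\ulcorner\phi\urcorner\,:\,\Sigma\models\phi\}$ to conclude the set is r.e. The only difference is that you spell out why the consequence set of $\Sigma$ is r.e.\ (effective coding of the $K_\phi$ symbols, decidability of the schemas, completeness), which the paper leaves implicit.
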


\begin{proof}
One such knowing machine is $\myslash$.
It is a knowing entity by Lemma~\ref{slashlemma}.
It is a machine because
$\{\ulcorner\phi\urcorner\,:\,\myslash\models K(\phi)\} = \{\ulcorner\phi\urcorner\,:\,\Sigma\models \phi\}$
is r.e.
\end{proof}

\section{Consistency of knowing one's own G\"{o}del number}

In this section we will construct a knowing machine which knows its own G\"{o}del number.
By Section~\ref{section3}, we cannot hope for such a machine to also know itself to be factive.

\begin{definition}
For every $e\in\N$, let $\Sigma_e$ be the set of axioms of a knowing machine, along with
Knowledge of having G\"{o}del number $e$, the schema $K(K(\phi)\leftrightarrow\ulcorner\phi\urcorner\in W_e)$.
\end{definition}

\begin{theorem}
\label{theorem7}
There is an $e\in\N$ such that there is a knowing machine which satisfies $\Sigma_e$.
In words: there is a knowing machine with Knowledge of having G\"{o}del number $e$.
\end{theorem}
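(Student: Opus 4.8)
The plan is to use Kleene's Recursion Theorem to fix a self-referential index $e$, and then to realize the machine as the $\mathscr{L}(K)$-structure in which $K$ is interpreted as provability from a suitably chosen background theory. Applying the slash construction of the previous section (the model $\myslash$ of Lemma~\ref{slashlemma}) directly to $\Sigma_e$ would run into a circularity: to check that the resulting model satisfies the schema $K(K(\phi)\leftrightarrow\ulcorner\phi\urcorner\in W_e)$ one would first need the model to be sound for $\Sigma_e$, and because $\Sigma_e$ contains Factivity this soundness is not delivered automatically by the slash definition. I would sidestep this by building the model over a \emph{weaker}, Factivity-free theory $S_e$, arranged so that its axioms are transparently true in the model yet still strong enough that Factivity --- and all of $\Sigma_e$ --- comes back out.

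For each $i\in\N$ let $S_i$ consist of: the axioms of $PA$; all instances of the knowing-machine schemas \emph{other than} Factivity, that is, all instances of Knowledge of Tautology, Knowledge Modus Ponens, Knowledge of Arithmetic, and Closure; and all sentences $K(\phi)\leftrightarrow\ulcorner\phi\urcorner\in W_i$. Thus $S_i$ is Factivity-free and asserts only ``positive'' facts of knowledge. Membership in $S_i$ is decidable uniformly in $i$, so there is a total recursive $g$ with $W_{g(i)}=\{\ulcorner\phi\urcorner:S_i\vdash\phi\}$ for every $i$; by the Recursion Theorem, fix $e$ with $W_e=W_{g(e)}=\{\ulcorner\phi\urcorner:S_e\vdash\phi\}$. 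Let $\mathscr{M}$ be the $\mathscr{L}(K)$-structure with universe $\N$, interpreting the symbols of $PA$ in the usual way, with $\mathscr{M}\models K(\phi)$ iff $S_e\vdash\phi$.

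First I would establish $\mathscr{M}\models S_e$, going through its axioms one at a time. The axioms of $PA$ hold because $\mathscr{M}$ has standard arithmetic. Each instance $K(\psi)$ of Knowledge of Tautology holds because first-order logic proves $\psi$; each instance $K(\alpha\to\beta)\to K(\alpha)\to K(\beta)$ of Knowledge Modus Ponens holds because $S_e\vdash\alpha\to\beta$ and $S_e\vdash\alpha$ yield $S_e\vdash\beta$; each instance $K(\psi)$ of Knowledge of Arithmetic holds because $\psi\in S_e$; and each instance $K(\chi)$ of Closure holds because the underlying Knowledge-schema instance $\chi$ lies in $S_e$. Finally, each bridge sentence $K(\phi)\leftrightarrow\ulcorner\phi\urcorner\in W_e$ holds because, by the choice of $e$, both sides are equivalent to ``$S_e\vdash\phi$'' --- here using that $\mathscr{M}$, having standard arithmetic, decides the canonical sentence ``$\ulcorner\phi\urcorner\in W_e$'' correctly. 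None of these checks refers to ``$\mathscr{M}\models S_e$'', so there is no circularity; hence, by soundness, $\mathscr{M}\models\phi$ whenever $S_e\vdash\phi$.

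Everything else then falls out. Factivity holds in $\mathscr{M}$: if $\mathscr{M}\models K(\phi)$ then $S_e\vdash\phi$, whence $\mathscr{M}\models\phi$. Knowledge of Tautology, Knowledge Modus Ponens, Knowledge of Arithmetic, and Closure hold because all of their instances lie in $S_e$ and $\mathscr{M}\models S_e$. And Knowledge of having G\"{o}del number $e$, the schema $K(K(\phi)\leftrightarrow\ulcorner\phi\urcorner\in W_e)$, holds because the sentence $K(\phi)\leftrightarrow\ulcorner\phi\urcorner\in W_e$ lies in $S_e$, hence is provable from it. So $\mathscr{M}\models\Sigma_e$, and $\mathscr{M}$ is a knowing machine because its set of known formulas is $\{\ulcorner\phi\urcorner:S_e\vdash\phi\}=W_e$, which is r.e. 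The main obstacle is precisely the circularity noted at the start; passing to the Factivity-free $S_e$ dissolves it, and the only genuine work left is the verification, sketched above, that $S_e$ is nonetheless strong enough to force every axiom of $\Sigma_e$ inside $\mathscr{M}$.
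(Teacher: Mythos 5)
Your proposal is correct and is essentially the paper's own proof: the paper likewise forms a Factivity-free base theory $\Sigma_e'$ (PA plus the knowing-machine schemas minus Factivity plus the bridge schema $K(\phi)\leftrightarrow\ulcorner\phi\urcorner\in W_e$), fixes $e$ by the Recursion Theorem so that $W_e$ is exactly the set of coded consequences of that theory, interprets $K(\phi)$ as consequence from it, and recovers Factivity at the end from $\mathscr{M}\models\Sigma_e'$. The only differences are cosmetic ($\vdash$ versus $\models$, and whether the schema $K(K(\phi)\leftrightarrow\ulcorner\phi\urcorner\in W_e)$ is itself placed in the base theory, which affects nothing).
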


\begin{proof}
Let $\Sigma_e^{'}$ consist of the axioms of $PA$, along with
all the axioms of $\Sigma_e$ \emph{except} for Factivity, along with the schema
$K(\phi)\leftrightarrow \ulcorner\phi\urcorner\in W_e$ ($\phi$ ranging over sentences).

For $e\in\N$,
by a \emph{coded consequence of $\Sigma_e^{'}$}, I mean the G\"{o}del number
$\ulcorner\phi\urcorner$ of a formula $\phi$ such that $\Sigma_e^{'}\models\phi$.
Given $e\in\N$, we can effectively write a program to enumerate the coded consequences of $\Sigma_e^{'}$.
By the Church-Turing Thesis, there is a total computable function $f:\N\to\N$
such that for every $e\in\N$,
$W_{f(e)}$ is the set of coded consequences of $\Sigma_e^{'}$.
By Kleene's Recursion Theorem, there is an $e\in\N$ such that $W_{f(e)}=W_e$.
Thus, $W_e$ is the set of coded consequences of $\Sigma_e^{'}$.
Fix this $e$ hereafter.

Let $\mathscr{M}$ be the following $\mathscr{L}(K)$-structure.
The universe of $\mathscr{M}$ is $\N$, and symbols of PA are interpreted as usual.
Predicate symbols are interpreted by $\mathscr{M}\models K(\phi)$
iff $\Sigma_e^{'}\models\phi$.
I will show $\mathscr{M}\models\Sigma_e$, proving the theorem.

\begin{itemize}
\item \emph{(Knowledge of Tautology)}  If $\phi$ is a tautology, then $\Sigma_e^{'}\models\phi$,
so $\mathscr{M}\models K(\phi)$.
\item \emph{(Knowledge Modus Ponens)} If $\mathscr{M}\models K(\phi\rightarrow\psi)$ and $\mathscr{M}\models K(\phi)$
then $\Sigma_e^{'}\models\{\phi\rightarrow\psi,\phi\}$, thus $\Sigma_e^{'}\models\psi$, so $\mathscr{M}\models K(\psi)$.
\item \emph{(PA)} With universe $\N$ and interpreting symbols of $PA$ as usual, $\mathscr{M}\models PA$.
\item \emph{(Closure, Knowledge of PA, Knowledge of having G\"{o}del number $e$)}
If $\phi$ is an axiom of $PA$, an instance of $K(\psi)\leftrightarrow\ulcorner\psi\urcorner\in W_e$, or if $K(\phi)$
iis an instance of Closure, then $\phi\in \Sigma_e^{'}$ by construction, so $\Sigma_e^{'}\models \phi$ and
$\mathscr{M}\models K(\phi)$.
\item \emph{(Having G\"{o}del number $e$)}
$\mathscr{M}\models K(\phi)$ iff $\Sigma_e^{'}\models \phi$, iff $\ulcorner\phi\urcorner$
is a coded consequence of $\Sigma_e^{'}$, iff $\ulcorner\phi\urcorner\in W_{f(e)}=W_e$,
which holds iff $\mathscr{M}\models \ulcorner\phi\urcorner\in W_e$ (since $\mathscr{M}$ has universe $\N$ and
interprets symbols of $PA$ as usual).
\item \emph{(Factivity)}
By the previous claims, $\mathscr{M}\models \Sigma_e^{'}$.
Assume $\mathscr{M}\models K(\phi)$.
By definition, $\Sigma_e^{'}\models \phi$.
Since $\mathscr{M}\models\Sigma_e^{'}$, we have $\mathscr{M}\models\phi$.
\end{itemize}
\end{proof}


\begin{thebibliography}{9}

\bibitem{carlson2000}
Timothy J.~Carlson (2000). Knowledge, machines, and the consistency of Reinhardt's strong mechanistic thesis. \emph{Annals of Pure and Applied Logic}
\textbf{105} 51--82.

\bibitem{friedman}
Harvey Friedman and Michael Sheard (1987). An Axiomatic Approach to Self-Referential Truth.
\emph{Annals of Pure and Applied Logic} \textbf{33} 1--21.

\bibitem{kleene}
Stephen C.~Kleene (1962). Disjunctions and Existence under Implication in Elementary
Intuitionistic Formalisms.  \emph{Journal of Symbol Logic} \textbf{27} 11--18.

\bibitem{reinhardt}
William N.~Reinhardt (1986). Epistemic theories and the interpretation of Godel's incompleteness theorems. \emph{Journal of Philosophical Logic}
\textbf{15} 427--474.

\bibitem{shapiro}
Stewart Shapiro (1985). Epistemic and intuitionistic arithmetic, in: Stewart Shapiro (Ed.), Intensional Mathematics (North-Holland, Amsterdam),
pp.~11--46.

\end{thebibliography}
\end{document}